\documentclass[12pt,twoside]{article}
\usepackage[T1]{fontenc}
\usepackage{amssymb, latexsym, amsmath, fancyhdr, graphicx, enumerate}

\textwidth 6 truein
\oddsidemargin .25 truein
\evensidemargin .25 truein
\topmargin 0 truein
\textheight 8.5in

\thispagestyle{empty}
\pagestyle{myheadings}
\markboth{L. Cherveny}{}

\newtheorem{theorem}{Theorem}[section]
\newtheorem{lemma}[theorem]{Lemma}
\newtheorem{proposition}[theorem]{Proposition}

\newenvironment{proof}[1][Proof]{\begin{trivlist}
\item[\hskip \labelsep {\bfseries #1}]}{\end{trivlist}}

\newcommand{\qed}{\nobreak \ifvmode \relax \else
      \ifdim\lastskip<1.5em \hskip-\lastskip
      \hskip1.5em plus0em minus0.5em \fi \nobreak
      \vrule height0.75em width0.5em depth0.25em\fi}

\newcommand{\fut}{\textrm{Fut}}

\DeclareSymbolFont{AMSb}{U}{msb}{m}{n}
\DeclareMathSymbol{\N}{\mathbin}{AMSb}{"4E}
\DeclareMathSymbol{\Z}{\mathbin}{AMSb}{"5A}
\DeclareMathSymbol{\R}{\mathbin}{AMSb}{"52}
\DeclareMathSymbol{\Q}{\mathbin}{AMSb}{"51}
\DeclareMathSymbol{\I}{\mathbin}{AMSb}{"49}
\DeclareMathSymbol{\C}{\mathbin}{AMSb}{"43}

\begin{document}

\title{Remarks on Localizing Futaki-Morita Integrals At Isolated Degenerate Zeros}
\author{Luke Cherveny}

\maketitle

\begin{abstract}

In this note we study the localization of Futaki-Morita integrals at isolated degenerate zeros by giving a streamlined exposition in the spirit of Bott \cite{bott:1967mich} and implement the localization procedure for a holomorphic vector field on $\C P^n$ with a maximally degenerate zero, giving an essentially unique formula for the Futaki-Morita integral invariants without using a summation over multiple points.  In a coming paper we will apply similar calculations to the Calabi-Futaki invariant of a K\"ahler blowup.

\end{abstract}


\section{Introduction}

Let $M$ be an $n$-dimensional compact complex manifold and $\mathfrak{h}$ the Lie algebra of holomorphic vector fields on $M$.   An isolated zero $p$ of $X \in \mathfrak{h}$ is called \emph{nondegenerate} if for local coordinates $(z_1, \dots, z_n)$ centered at $p$,

\[
X = \sum_{i,j} \left[ a_{ij} z_i  + O(z^2) \right] \frac{\partial}{\partial z_j}
\]

\noindent the matrix $DX = (a_{ij})$ is invertible at $p$, i.e. $\det DX_p \neq 0$, and \emph{degenerate} otherwise. 
Given a Hermitian metric on $M$, let $\Theta$ be the curvature of its Chern connection $\nabla$.  The holomorphic localization theorem of Bott \cite{bott:1967mich} (see also \cite{griffithsharris}) states:

\begin{theorem}\label{botttheorem}(Bott \cite{bott:1967mich})
Suppose $X \in \mathfrak{h}$ is such that $\textrm{Zero}(X)$ consists of isolated nondegenerate zeros $\{p_i\}$.  For any invariant polynomial $\phi$ of degree $n$, 

\begin{equation}\label{botttheoremequation}
\int_M \phi \left(\frac{\sqrt{-1}}{2\pi}\Theta \right) = \sum_{i} \frac{\phi(DX_{p_i})}{\det DX_{p_i}}
\end{equation}
\end{theorem}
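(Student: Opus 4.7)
The plan is to construct an explicit primitive of $\phi(\Theta/2\pi i)$ on $U := M \setminus \textrm{Zero}(X)$ and then localize the integral to small spheres around each $p_i$ via Stokes' theorem.

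The central algebraic object is the Bott moment $L(X) := \nabla_X - L_X$, a section of $\mathrm{End}(T^{1,0}M)$. Since $X$ is holomorphic and $\nabla$ is the Chern connection, $L(X)$ is $\mathcal{O}_M$-linear in $X$, and a short computation in the given local coordinates identifies $L(X)_{p_i}$ with $DX_{p_i}$ up to a transpose that $\phi$ and $\det$ do not see. The right-hand side of \eqref{botttheoremequation} is therefore $\sum_i \phi(L(X)_{p_i})/\det L(X)_{p_i}$, and the argument aims to match this against a sum of residues coming from the localization.

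On $U$, use the Hermitian metric to produce a $1$-form $\eta$ with $\eta(X) \equiv 1$, and modify the Chern connection to a Bott connection $\tilde\nabla$ that is compatible with the splitting $T^{1,0}M|_U = \mathbb{C}X \oplus X^\perp$ and trivial along the line summand $\mathbb{C}X$. The curvature $\tilde\Theta$ then has a block form with a vanishing $\mathbb{C}X$-row, and any top-degree invariant polynomial annihilates such a matrix, so $\phi(\tilde\Theta/2\pi i) \equiv 0$ on $U$. The Chern--Simons transgression between $\nabla$ and $\tilde\nabla$ accordingly produces an explicit form $\psi \in \Omega^{2n-1}(U)$ with $d\psi = \phi(\Theta/2\pi i)$ on $U$.

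Applying Stokes' theorem on $M_\epsilon := M \setminus \bigsqcup_i B_\epsilon(p_i)$ now reduces the theorem to the local residue calculation $\lim_{\epsilon \to 0} \int_{\partial B_\epsilon(p_i)} \psi = \phi(DX_{p_i})/\det DX_{p_i}$, and this is the main obstacle. I would carry it out by rescaling $z \mapsto \epsilon z$ in the coordinates at $p_i$: the higher-order tail of $X$ vanishes in the limit, the Chern connection becomes Euclidean to leading order, and $\psi$ converges to a universal $(2n-1)$-form on $\mathbb{C}^n \setminus \{0\}$ depending only on $\eta$ and on $L(X)_{p_i} = DX_{p_i}$. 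Nondegeneracy of $DX_{p_i}$ permits conjugating it to diagonal form, after which the universal integral decomposes into a product of one-dimensional contour integrals of Bochner--Martinelli type; multiplicativity of $\phi$ and $\det$ on the diagonal blocks then reassembles the product as $\phi(DX_{p_i})/\det DX_{p_i}$, and summing over the finitely many zeros yields \eqref{botttheoremequation}.
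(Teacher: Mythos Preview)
Your overall architecture---build a primitive for $\phi(\Theta)$ on $M\setminus\mathrm{Zero}(X)$, apply Stokes, and evaluate boundary residues---is exactly the paper's, but the two implementations diverge and your transgression step contains a real gap.

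The paper does not introduce a second connection or invoke Chern--Simons. It works directly with $E=\mathcal{L}_X-\nabla_X$ (your $-L(X)$) and the $(1,0)$-form $\eta=g(\cdot,\bar X)/\|X\|^2$, setting $\phi_i=\binom{n}{i}\tilde\phi(E,\dots,E,\Theta,\dots,\Theta)$ and $\Phi=\sum_{i=0}^{n-1}\eta\wedge\phi_i\wedge(\bar\partial\eta)^{n-i-1}$. A telescoping calculation using $\bar\partial E=i_X\Theta$, $i_X\eta=1$, and injectivity of $i_X$ on top-degree forms yields the pointwise identity $\bar\partial\Phi+\phi_n=0$ on $\hat M$. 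For the local contribution the paper takes the metric Euclidean near each $p_i$, so $\Theta=0$ and $E=-DX$ there; then $\eta\wedge(\bar\partial\eta)^{n-1}$ is identified with the pullback of the Bochner--Martinelli kernel under $z\mapsto X(z)$, the boundary integral becomes the Grothendieck residue $\mathrm{Res}_{p_i}\bigl[\phi(DX)\,dz^1\wedge\cdots\wedge dz^n/(X^1\cdots X^n)\bigr]$, and in the nondegenerate case this equals $\phi(DX_{p_i})/\det DX_{p_i}$ by the transformation rule with $B=(DX)^{-1}$. No rescaling limit and no diagonalization enter.

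The gap in your version is the sentence ``any top-degree invariant polynomial annihilates such a matrix.'' A zero row (or column) does not force an invariant polynomial of degree $n$ to vanish: already for $n=2$, $\phi=\mathrm{tr}^2$, and $A=\bigl(\begin{smallmatrix}0&0\\0&1\end{smallmatrix}\bigr)$ one gets $\phi(A)=1$, and the same example with $A$ replaced by a $2$-form--valued matrix shows $\phi(\tilde\Theta)$ need not vanish as a $2n$-form. What actually drives Bott-type vanishing is not the block shape but the foliation structure: for a Bott connection on the \emph{normal} bundle $Q=T^{1,0}M/\mathbb{C}X$, the curvature is built from forms transverse to the leaves, so any product of form-degree exceeding the real codimension $2(n-1)$ vanishes for rank reasons. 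That argument lives on $Q$, not on $T^{1,0}M$ with a deleted row, and it is a statement about wedge products of forms, not about matrices. A smaller issue downstream: nondegeneracy means $\det DX_{p_i}\neq 0$, which does not imply $DX_{p_i}$ is diagonalizable, so your last reduction to one-dimensional contour integrals would need a density or Jordan-form argument to close.
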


Bott \cite{bott:1967jdg} extended this result to vector fields with positive dimensional but still nondegenerate zero locus (nondegenerate in the sense that $DX$ is invertible in the normal direction to the zero locus).

When $\textrm{deg}(\phi) < n$ the lefthand side of (\ref{botttheoremequation}) is of course zero for dimensional reasons.  A generalization to $\textrm{deg}(\phi) > n$ was given by Futaki and Morita \cite{futakimorita:1985}:  Let

\begin{equation}\label{definitionE}
E = \mathcal{L}_X - \nabla_X
\end{equation}

\noindent where $\mathcal{L}_X$ is the Lie derivative with respect to $X$.  It is straightforward to check $E$ defines a smooth endomorphism $E \in \Gamma(\textrm{End}(TM'))$ of the holomorphic tangent bundle $TM'$.  The \emph{Futaki-Morita} integral is

\[
f_{\phi}(X) = \int_M \tilde \phi ( \underbrace{E, \dots, E}_{k \textrm{ copies}}, \frac{\sqrt{-1}}{2\pi} \Theta, \dots,  \frac{\sqrt{-1}}{2\pi}\Theta )
\]

\noindent where $\tilde \phi$ is the polarization of an invariant polynomial $\phi$ of degree $n+k$.  Futaki and Morita showed $f_{\phi}: \mathfrak{h} \rightarrow \C$ does not depend on the choice of metric (in Bott's theorem this follows from Chern-Weil theory) and by the same transgression argument used by Bott to prove Theorem \ref{botttheorem} showed:

\begin{theorem}(Futaki-Morita \cite{futakimorita:1985})  Suppose that $X \in \mathfrak{h}$ has isolated, nondegenerate zeros $\{p_i\}$ and $E \in \Gamma(\textrm{End}(TM'))$ as in (\ref{definitionE}).  Then 

\begin{equation}\label{fmlocalization}
{n + k \choose n} f_{\phi}(X) = (-1)^k \sum_{i} \frac{\phi(DX_{p_i})}{\det DX_{p_i}}
\end{equation}
\end{theorem}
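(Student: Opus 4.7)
The plan is to repackage $E$ and the curvature into a single equivariant Chern--Weil form and then localize by the same transgression argument used by Bott. First, set $A = E + \tfrac{\sqrt{-1}}{2\pi}\Theta$, an endomorphism-valued form with $E$ of form-degree $0$ and $\Theta$ of form-degree $2$. Polarization gives
\[
\phi(A) \;=\; \sum_{j=0}^{n+k} \binom{n+k}{j}\, \tilde\phi\bigl(\underbrace{E,\dots,E}_{j},\;\underbrace{\tfrac{\sqrt{-1}}{2\pi}\Theta,\dots,\tfrac{\sqrt{-1}}{2\pi}\Theta}_{n+k-j}\bigr),
\]
and only the summand with $j=k$ contributes a $2n$-form. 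Hence $\int_M [\phi(A)]_{2n} = \binom{n+k}{n}\, f_{\phi}(X)$, reducing the theorem to the evaluation of $\int_M [\phi(A)]_{2n}$.

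Next, combine $\nabla\Theta = 0$ (Bianchi) with the moment-type identity $\nabla E = \iota_X \Theta$, which uses both that $\nabla$ is the Chern connection and that $X$ is holomorphic. Invariance of $\phi$ then yields the equivariant Bianchi identity $(d - \iota_X)\phi(A) = 0$, so $[\phi(A)]_{2n}$ is $d$-closed on all of $M$. On $M \setminus \textrm{Zero}(X)$ it is actually $d$-exact: choose a smooth $1$-form $\alpha$ with $\iota_X\alpha \equiv 1$ (for instance, dualize $X$ via a Hermitian metric and normalize), and the standard contracting homotopy for the Cartan differential $d - \iota_X$ built from $\alpha$ produces an explicit equivariant primitive $\beta$ with $\phi(A) = (d - \iota_X)\beta$. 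Extracting the top form-degree, $[\phi(A)]_{2n} = d\beta_{2n-1}$ off the zero set.

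Finally, excise balls $B_{\varepsilon}(p_i)$ and apply Stokes:
\[
\int_M [\phi(A)]_{2n} \;=\; -\lim_{\varepsilon \to 0}\sum_i \int_{\partial B_{\varepsilon}(p_i)} \beta_{2n-1}.
\]
In coordinates centered at $p_i$ one has $E(p_i) = -DX_{p_i}$, since $\nabla_X$ vanishes at $p_i$ while $\mathcal{L}_X \partial_{z_\ell} = -\sum_j a_{\ell j}\partial_{z_j}$. Bott's local residue calculation carries over verbatim to $\beta_{2n-1}$, producing the contribution $\phi(E_{p_i})/\det E_{p_i}$, which by homogeneity collapses to
\[
\frac{\phi(E_{p_i})}{\det E_{p_i}} \;=\; \frac{(-1)^{n+k}\phi(DX_{p_i})}{(-1)^n \det DX_{p_i}} \;=\; (-1)^k\, \frac{\phi(DX_{p_i})}{\det DX_{p_i}}.
\]
Summing over the zeros gives (\ref{fmlocalization}).

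The main obstacle is verifying the moment-type identity $\nabla E = \iota_X \Theta$ (or whichever precise normalization makes $\phi(A)$ equivariantly closed under $d - \iota_X$); this is the content that distinguishes the Chern--Weil theory of a holomorphic Hermitian bundle equipped with an infinitesimal holomorphic automorphism from the bare Chern--Weil theory, and it is what propagates the $E$-data into the transgression. Once this identity is in hand, the primitive construction and residue calculation in Steps 3--4 follow Bott \cite{bott:1967mich} essentially verbatim, and the only new bookkeeping is the sign tracking through $E_{p_i} = -DX_{p_i}$ that produces the factor $(-1)^k$.
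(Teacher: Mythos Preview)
Your proposal is correct and follows essentially the same transgression route as the paper: package $E$ and $\Theta$ into a single equivariant form, establish the moment identity, build a primitive via a $1$-form dual to $X$, apply Stokes, and evaluate locally by Bott's residue with the sign $(-1)^k$ coming from $E_{p_i}=-DX_{p_i}$.

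One technical point to sharpen: the identity that actually holds is $\bar\partial E = \iota_X\Theta$ (this is what the paper proves in its Lemma preceding the transgression), not the full $\nabla E = \iota_X\Theta$, so $\phi(A)$ is closed for $\bar\partial - \iota_X$ rather than $d-\iota_X$; this is more than a normalization of constants. Accordingly the paper takes $\alpha=\eta=g(\cdot,\bar X)/\|X\|^2$ of type $(1,0)$ and builds the explicit $(n,n-1)$-primitive $\Phi=\eta\wedge\sum_{i}\phi_i\wedge(\bar\partial\eta)^{n-i-1}$, after which $d\Phi=\bar\partial\Phi$ and Stokes applies. If you specialize your $\alpha$ to this $\eta$, your contracting homotopy reproduces the paper's $\Phi$ verbatim, and near each $p_i$ in a flat metric it becomes exactly the Bochner--Martinelli kernel, which is how the denominator $\det DX_{p_i}$ (rather than $\det E_{p_i}$ per se) arises from the Grothendieck residue.
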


Futaki-Morita moreover showed that Futaki's invariant obstructing the existence of K\"ahler-Einstein metrics on compact K\"ahler manifolds with $c_1(M) > 0$ can be understood within this integral invariant framework (see section \ref{futakisection}).

The proof of (\ref{fmlocalization}) is based on exhibiting the Futaki-Morita integral as a certain Grothendieck residue via transgression, and the Bochner-Martinelli kernel provides an explicit representative for the Grothendieck residue.  Using properties of the Grothendieck residue and inserting a power series expansion into the transgression argument, we will show the following extension to the case of isolated degenerate zeros:

\begin{theorem}\label{maintheorem}
If the zero locus of $X \in \mathfrak{h}$ is a single isolated degenerate zero $p$ such that in local coordinates centered at $p$

\[
z_i^{\alpha_i + 1} = \sum b_{ij} X_j
\]

\noindent for some matrix $B = (b_{ij})$ of holomorphic functions, then

\begin{equation}\label{maintheoremequation}
{n + k \choose n} f_{\phi}(X) = (-1)^k \frac{1}{\prod \alpha_i !} \cdot \frac{\partial^{|\alpha|} \left( \phi(DX) \det B \right)}{\partial z_1^{\alpha_1} \cdots \partial z_n^{\alpha_n}} \bigg|_{z = 0}
\end{equation}

\noindent If $\textrm{Zero}(X)$ consists of multiple isolated, possibly degenerate points then the Futaki-Morita integral is a sum over local contributions (\ref{maintheoremequation}).

\end{theorem}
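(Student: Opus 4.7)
The plan is to run the transgression argument of Bott and Futaki-Morita to localize the integral near $p$, identify the local contribution as a Grothendieck residue, and then apply the transformation law for Grothendieck residues to reduce to a monomial denominator where an explicit derivative formula is available.

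First, I would repeat the transgression used in \cite{futakimorita:1985}: on $M \setminus B_\epsilon(p)$ the Futaki-Morita integrand can be written as $d\beta$ for a form $\beta$ built from $X$ and the Chern connection, so $\binom{n+k}{n} f_\phi(X)$ reduces to a boundary integral over $\partial B_\epsilon(p)$. Expanding $E$ as a power series in $X$ and representing the boundary integral via the Bochner-Martinelli kernel, this local contribution is exactly the Grothendieck residue
\[
(-1)^k \, \textrm{Res}_p \!\left[ \frac{\phi(DX)\, dz_1 \wedge \cdots \wedge dz_n}{X_1, \ldots, X_n} \right].
\]
The hypothesis $z_i^{\alpha_i+1} \in (X_1, \ldots, X_n)$ forces the ideal $(X_1, \ldots, X_n)$ to have height $n$ at $p$, so $\{X_i\}$ is a regular sequence in $\mathcal{O}_{M,p}$ and the residue is well defined.

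Next I would apply the transformation law for Grothendieck residues: if $g_i = \sum_j b_{ij} X_j$ and both $\{X_j\}$ and $\{g_i\}$ cut out $p$ as an isolated zero, then
\[
\textrm{Res}_p \!\left[ \frac{h\, dz}{X_1, \ldots, X_n} \right] = \textrm{Res}_p \!\left[ \frac{h \det B\, dz}{g_1, \ldots, g_n} \right].
\]
Setting $g_i = z_i^{\alpha_i+1}$ and $h = \phi(DX)$ converts the residue to a standard monomial residue, which evaluates to the right-hand side of (\ref{maintheoremequation}) via the classical formula $\textrm{Res}_0[h\, dz/(z_1^{\alpha_1+1}, \ldots, z_n^{\alpha_n+1})] = \tfrac{1}{\prod \alpha_i!}\partial_{z_1}^{\alpha_1} \cdots \partial_{z_n}^{\alpha_n} h(0)$. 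As a consistency check, when all $\alpha_i = 0$ one has $B = (DX_p)^{-1}$ and $\det B = 1/\det DX_p$, reproducing (\ref{fmlocalization}). The multi-point case follows by a partition of unity, applying the single-point analysis in a neighborhood of each zero.

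The main obstacle I anticipate lies in the first step: carrying the transgression and the associated power-series expansion of $E$ through carefully enough in the degenerate case to extract precisely the Grothendieck residue of $\phi(DX)/(X_1 \cdots X_n)$ and nothing more. In the nondegenerate setting the $X_i$ themselves may serve as local coordinates, making the Bochner-Martinelli representation and residue evaluation essentially direct. In the degenerate setting the homotopy operator, built from $\overline{X}/|X|^{2}$-style kernels, must still be shown integrable near $p$, which is guaranteed because $\{X_i\}$ still defines an isolated zero; but the terms arising from higher-order powers of $X$ in the expansion of $E$ must be tracked so that only the leading $\phi(DX)$ contribution survives the transgression. Once this identification with the Grothendieck residue is established, the remaining steps are standard residue calculus and the hypothesis on $B$ does all the algebraic work.
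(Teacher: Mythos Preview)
Your proposal is essentially the paper's proof: transgression \`a la Bott/Futaki--Morita to a boundary integral, identification of that integral as a Grothendieck residue via the Bochner--Martinelli representative, then the transformation law plus the Cauchy formula for monomial denominators. The residue-theoretic second half is exactly Lemma~\ref{mainlemma} and Lemma~\ref{transformationrule}.

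The one place you diverge is the obstacle you flag in the first step. You propose to expand $E$ as a power series in $X$ and worry about controlling higher-order terms in the degenerate case. The paper sidesteps this entirely by exploiting the metric independence of $f_\phi(X)$: it chooses a Hermitian metric that is \emph{Euclidean} on a neighborhood of each zero (via a partition of unity). In such a metric the Christoffel symbols vanish near $p$, so $E|_{B_{\epsilon/2}(p)} = -DX$ \emph{exactly}, and $\Theta = 0$ there, so $\Phi$ reduces to $(-1)^{n+k}\phi(DX)\,\eta \wedge (\bar\partial\eta)^{n-1}$ with no remainder terms to track. The only power-series expansion in the argument is the Taylor expansion of the holomorphic numerator $\phi(DX)\det B$ when evaluating the monomial residue. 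So your anticipated difficulty is real for a generic metric but dissolves once you remember you may choose the metric; this is the one idea to add to make your sketch complete.
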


The existence of such an $\alpha$ is guaranteed by the strong Hilbert Nullstellensatz for analytic functions.  In the case that $X$ has nondegenerate zeros, one may take $B = DX^{-1}$ with $\alpha_i = 0$, and (\ref{fmlocalization}) is immediately recovered.

Theorem \ref{maintheorem} follows from a simple power series expansion in Bott's transgression argument and application of well-known properties of Grothendieck residues.  Surprisingly it does not seem to have received use in the literature although it has certainly been pointed out in related contexts \cite{baumbott:1972} \cite{liu:1995} \cite{carrellliebermann:1973} \cite{correarodriguez:2016}.  We give a complete presentation, hopefully contributing to the available exposition on Bott-style localization.  The calculations in the last section serve to illustrate localization at a degenerate zero, even if the results are standard.  We remark that Proposition \ref{cpnexample} is essentially unique in that any vector field with a maximally degenerate zero on $\C P^n$ is equivalent to the one used, and thus any formula for Futaki-Morita invariants on $\C P^n$ not involving a summation over fixed points will be of the form arrived at.

One application of localization at degenerate zeros is to calculations on blow-ups: If $X$ is a holomorphic vector field with nondegenerate zero at $p$, the blowup $\textrm{Bl}_p(M)$ at $p$ admits a holomorphic lift $\tilde X$ of $X$.  Zeros of $\tilde X$ in the exceptional divisor may very well be degenerate, depending on the linearization of $X$ at $p$.  We will study this in a forthcoming paper, in particular extending results of Li and Shi concerning the Futaki invariant of K\"ahler surface blow-ups \cite{lishi:2015}.  The calculations used will be extensions of that in Proposition \ref{cpnexample}.\\

The paper is organized as follows: In section 2 we recall background material on invariant polynomials, Grothendieck residues, the Bochner-Martinelli kernel, and the Futaki invariant for clarity.  In section 3 we give a complete proof of the main theorem, which may in particular be read as a self-contained proof of the results of Bott and Futaki-Morita.  In section 4 we give our main calculation.

\section{Background}


\subsection{Invariant Polynomials} 

Let $\mathfrak{gl}(n,\C)$ denote the spaces of $n \times n$ matrices over $\C$.  An \emph{invariant polynomial} $\phi: \mathfrak{gl}(n,\C) \rightarrow \C$ is a homogeneous polynomial in the entries of $\mathfrak{gl}(n,\C)$ such that $\phi(A) = \phi(gAg^{-1})$ for all $g \in GL(n,\C)$.

We will consider two sources of input for an invariant polynomial $\phi$:

\begin{enumerate}

\item Let $X \in \mathfrak{h}$ be a holomorphic vector field vanishing at $p$ and consider $A = DX$.  As coordinate change about $p$ has the effect of conjugating $DX$, $\phi(DX)$ is locally a well-defined holomorphic function.  

\item Let $E \in \Omega^k(\textrm{End}(TM'))$.  Locally $E$ is a $k$-form valued matrix that transforms according to $E_{\alpha} = g_{\alpha \beta} E_{\beta} g^{-1}_{\alpha \beta}$, where $g_{\alpha \beta}$ are the usual transition functions for $TM'$.  By the invariance hypothesis, $\phi(E) \in \Omega^*(M,\C)$ given by point-wise evaluation in local coordinates is well-defined. 

\end{enumerate}

\subsection{Grothendieck Residues}

Let $U$ be an open ball about the origin in $\C^n$ and consider holomorphic functions $f_1, \dots, f_n \in \mathcal{O}(\overline{U})$ such that the origin is an isolated zero of $f = (f_1, \dots, f_n)$.  The \emph{Grothendieck residue} of

\[
\omega = \frac{h(z) dz^1 \wedge \dots \wedge dz^n}{f_1(z) \cdots f_n(z)} \qquad h \in \mathcal{O}(\overline{U})
\]

\noindent at $0$ is defined to be

\begin{equation}
\textrm{Res}_0 (\omega) = \left(\frac{1}{2\pi \sqrt{-1}} \right)^n \int_{\Gamma} \omega.
\end{equation}

\noindent where $\Gamma$ is the real $n$-cycle $\Gamma = \{z |\; |f_i(z)| = \epsilon_i\}$, oriented by 

\[
d(\textrm{arg}(f_1)) \wedge \dots \wedge d(\textrm{arg}(f_n)) > 0.
\]
 
Linearity of the residue is immediate, as is the fact that $\textrm{Res}_0\omega$ depends only on the homology class $\Gamma \in H_n(U-D,\Z)$ and cohomology class $[\omega] \in H^n_{DR}(U-D)$ where $D_i = f_i^{-1}(0)$ and $D = \bigcup D_i$.

An alternate description of the Grothendieck residue that employs a degree $2n-1$ de Rham class is as follows: Let $U_i = U - D_i$ and consider the open cover $\{ U_i\}$ of $U^* = U - \{0\}$.  The meromorphic form $\omega$ can be thought of as a \v{C}ech $(n-1)$-coycle for the sheaf of holomorphic forms on $U^*$, which is trivially closed as there are only $n$ open sets in the cover.  We denote by $\eta_{\omega}$ the image of $\left( \frac{1}{2\pi \sqrt{-1}} \right)^n \omega$ under the Dolbeault isomorphism $\check{\mathrm{H}}^{n-1}(U^*,\Omega^n) \cong H^{n,n-1}(U^*)$, and since $d = \bar \partial$ on forms of type $(n,n-1)$ we may think of $\eta_{\omega}$ as an element of $H^{2n-1}_{DR}(U^*) \cong \C$.  Here we are using that $U^*$ has the homotopy type of the $2n-1$ sphere.

It then turns out the Grothendieck residue is precisely the image of the following sequence of maps:

\begin{equation}\label{residuemap}
\textrm{Res}_0: \left( \frac{1}{2\pi \sqrt{-1}} \right)^n \omega \longmapsto \eta_{\omega} \longmapsto \int_{S^{2n-1}} \eta_{\omega}  \in \C
\end{equation}

\noindent We refer to Griffiths and Harris \cite{griffithsharris} for the calculation.

\begin{lemma}[Transformation Rule]\label{transformationrule} Suppose that $g = (g_1, \dots, g_n)$ satisfies the same hypotheses as $f$ above and moreover that 

\[
g_i(z) = \sum_j b_{ij}(z)f_j(z),
\]

\noindent for some matrix $B(z) = (b_{ij}(z))$ of holomorphic functions.  Then for any $h(z) \in \mathcal{O}(\overline{U})$,

\[
\textrm{Res}_0 \frac{h(z) dz^1 \wedge \dots \wedge dz^n}{f_1(z) \dots f_n(z)} = \textrm{Res}_0 \frac{h(z) \det B(z) dz^1 \wedge \dots \wedge dz^n}{g_1(z) \dots g_n(z)} 
\]
\end{lemma}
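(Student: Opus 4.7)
The plan is to use the description (\ref{residuemap}) of the residue via the \v Cech--Dolbeault isomorphism, which shows that the residue of a form $\omega$ depends only on the class of $\omega$ in $\check H^{n-1}(U^*,\Omega^n_{\textrm{hol}})$. The problem thus reduces to showing that
\[
\omega_f:=\frac{h\,dz^1\wedge\cdots\wedge dz^n}{f_1\cdots f_n}\quad\text{and}\quad \omega_g:=\frac{h\det B\,dz^1\wedge\cdots\wedge dz^n}{g_1\cdots g_n}
\]
represent the same class on a common refinement of the Leray covers $\{U-f_i^{-1}(0)\}$ and $\{U-g_j^{-1}(0)\}$ of $U^*$, with $h\in\mathcal{O}(\overline U)$ entering only as a passive holomorphic parameter since every step below is linear in $h$.

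The core of the argument is a direct combinatorial identity: I would expand $\det B=\sum_\sigma\mathrm{sgn}(\sigma)\prod_i b_{i\sigma(i)}$ and substitute $g_i=\sum_j b_{ij}f_j$ into the denominator of $\omega_g$. After a partial-fraction expansion, each term is indexed by a multi-index of the $j$'s, and those that are not a permutation either vanish by antisymmetry of the numerator or are components of a \v Cech coboundary on the common refinement; the single surviving diagonal term collapses to $\omega_f$ after the cofactor-type cancellation. Equivalently, and perhaps more elegantly, one can use the Bochner--Martinelli representation implicit in the \v Cech--Dolbeault construction: viewing $f,g$ as holomorphic maps $U\to\C^n$, the pullback of the Bochner--Martinelli kernel along $g=Bf$ differs from its pullback along $f$ by the factor $\det B$, up to $\bar\partial$-exact terms which integrate to zero over $S^{2n-1}$.

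I expect the main obstacle to be the bookkeeping required when $\det B(0)=0$: then Cramer's rule does not provide a holomorphic substitution $f_i=\sum_j (B^{-1})_{ij}g_j$ in a full neighborhood of the origin, so the coboundary relating $\omega_f$ and $\omega_g$ cannot be written down directly on $U$. The cleanest remedy is to first prove the identity when $\det B$ is nowhere zero on $\overline U$, where the standard Cramer manipulation goes through, and then pass to the general case by the perturbation $B\rightsquigarrow B+\varepsilon I$ (replacing $g_i$ by $g_i+\varepsilon f_i$). For $\varepsilon$ in an open dense set the perturbed matrix has nonvanishing determinant near $0$ and the origin remains the unique common zero, so the identity holds by the first step; continuity of the residue under such deformations then yields the lemma in the limit $\varepsilon\to 0$.
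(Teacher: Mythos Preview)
Your outline is sound and the two-step strategy (invertible $B$ first, then a perturbation argument) is correct, but it is genuinely different from the route the paper points to. The paper follows Griffiths--Harris: one deforms $f$ itself via a \emph{good deformation} $f_t$ (existence by a Sard-type argument) so that for $t>0$ the zero at the origin splits into finitely many nondegenerate zeros; at each of those the residue is computed explicitly as $h/\det Df_t$, the transformation law becomes the elementary determinant identity $\det Dg_t=\det B\cdot\det Df_t$, and one lets $t\to 0$. You instead keep $f$ fixed and deform $B\rightsquigarrow B+\varepsilon I$ (equivalently $g\rightsquigarrow g+\varepsilon f$), reducing to the case where $B$ is pointwise invertible and Cramer's rule furnishes the \v Cech coboundary directly. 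Your perturbation is simpler to set up (no Sard argument, and the zero set does not move), whereas the paper's reduction makes the base case a one-line Jacobian computation with no cohomological bookkeeping.

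Two small caveats. First, your Bochner--Martinelli alternative is not correct as stated: the kernel involves $\bar f_i$ and $d\bar f_i$, so pullback along $g=Bf$ does not differ from pullback along $f$ by the holomorphic factor $\det B$; the antiholomorphic part produces $\overline{\det B}$ and mixed terms, and straightening this out essentially reproduces the \v Cech coboundary argument you already sketched. Second, the phrase ``continuity of the residue under such deformations'' deserves one sentence of justification: fix a sphere $\partial B_\epsilon(0)$ on which neither $f$ nor $g$ vanishes and use the Bochner--Martinelli integral representation (\ref{distinguishedrep}); the integrand then depends uniformly continuously on $\varepsilon$, so the limit passes through.
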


\noindent We refer to p. 657-659 of \cite{griffithsharris} for a full proof.  The key idea is the notion of a \emph{good deformation} of $f$, namely a family $f_t = (f_{1,t}, \dots, f_{n,t})$ of holomorphic functions on $U$ satisfying the same hypotheses as $f_i$, continuous in $t$, with $f_0 = f$, and such that for $t > 0$ the Jacobian of $f_t$ is invertible.  A Sard's Theorem argument proves the existence of such a good deformation.  The lemma follows by establishing the transformation law in the case of an invertible Jacobian and taking an appropriate limit as $t \rightarrow 0$.


\subsection{Bochner-Martinelli Formula}

The \emph{Bochner-Martinelli} kernel is defined on $\C^n \times \C^n$ by

\[
\beta(w,z) = C_n \sum_{i=1}^n \frac{(\overline{w}_i - \overline{z}_i)\overline{\Phi_i(w-z)} \wedge \Phi(w)}{\|w-z\|^{2n}}
\]

\noindent where

\begin{align*}
C_n &= (-1)^{n(n-1)/2}\frac{(n-1)!}{(2\pi\sqrt{-1})^n}\\
\Phi_i(w) &= (-1)^{i-1} dw^1 \wedge \dots \wedge \widehat{dw^i} \wedge \dots \wedge dw^n \\
\Phi(w) &= dw^1 \wedge \dots \wedge dw^n
\end{align*}


\noindent Key properties are:

\begin{enumerate}
\item $\bar \partial \beta(w,z) = 0$ as a function of $w$ away from the diagonal $w = z$.
\item The constant $C_n$ is such that $\int_{\partial B_{\epsilon}(0)} \beta(w,0) = 1$, where $B_{\epsilon}(0)$ is any ball in $\C^n$ centered at $0$ and integration is with respect to $w$.

\end{enumerate}

The Bochner-Martinelli kernel may be used to construct an explicit representative of the class $\eta_{\omega}$ in (\ref{residuemap}): Given $f, \omega, \eta_{\omega}$ as before, let $F: U \rightarrow \C^n \times \C^n$ be $F(z) = (z + f(z),z)$.  It follows that

\[
\eta_{\omega} = h(z) F^*\beta(w,z)
\]

\noindent is a distinguished representative of the class $\left[ \left(\frac{1}{2\pi \sqrt{-1}} \right)^n \omega \right]$.  In other words,

\begin{equation}\label{distinguishedrep}
\textrm{Res}_0(\omega) = C_n \int_{\partial B_{\epsilon}(0)} h(z) \sum_{i=1}^n \frac{(-1)^{i-1}\bar f_i d\bar f_1 \wedge \dots \wedge \widehat{d\bar f_i} \wedge \dots \wedge d\bar f_n \wedge dz_1 \wedge \dots \wedge dz_n}{\|f\|^{2n}}
\end{equation}

\subsection{Futaki Invariant}\label{futakisection}

Let $M$ be an $n$-dimensional compact K\"ahler manifold.  Establishing the existence of various canonical metrics on $M$ is one of the central problems in K\"ahler geometry.  See \cite{szekelyhidibook} for a survey.  In the search for K\"ahler-Einstein metrics, the first Chern class $c_1(M)$ is necessarily definite or zero according to the sign of the Ricci curvature, imposing a strong topological restriction.  The celebrated works of Yau \cite{yau:1978} and Aubin, Yau \cite{aubin:1976} \cite{yau:1978} settled existence and uniqueness of K\"ahler-Einstein metrics in the cases of $c_1(M)= 0$ and $c_1(M) <0$, respectively.  When $M$ has positive first Chern class there are well-known obstructions and the problem has only recently been settled in the work of Chen-Donaldson-Sun \cite{cds:2015}; see also Tian \cite{tian:2015}.

We recall Futaki's obstruction to K\"ahler-Einstein metrics when $c_1(M) > 0$.  Choose a K\"ahler metric $\omega \in 2\pi c_1(M)$.  Since $\textrm{Ric}(\omega) \in 2\pi c_1(M)$ as well, by the $\partial \bar \partial$-lemma

\[
\textrm{Ric}(\omega) - \omega = \frac{\sqrt{-1}}{2\pi} \partial \bar \partial F_{\omega}
\]

\noindent for some real-valued function $F_{\omega}$ (defined up to addition of a constant).  The metric $\omega$ is called \emph{K\"ahler-Einstein} if $F_{\omega}$ is constant.

Futaki \cite{futaki:1983} \cite{futaki:1988} defined what is now called the \emph{Futaki invariant}

\[
\fut(X, \omega) = \int_M X(F_{\omega}) \; \omega^n
\]

\noindent and showed the definition does not depend on the choice of $\omega$ within its K\"ahler class.  The vanishing of $\fut(X, \omega)$ is thus necessary for the existence of a K\"ahler-Einstein metric. 

Futaki and Morita \cite{futakimorita:1984} \cite{futakimorita:1985} showed that the Futaki invariant may be understood within the Futaki-Morita integral invariant framework.  Specifically, they proved 

\begin{equation}\label{futakiframework}
\textrm{Fut}(X, \omega) = f_{\phi}(X)
\end{equation}

\noindent where $\phi$ is the invariant polynomial $\phi(A) = \textrm{Tr}(A^{n+1})$.  By (\ref{fmlocalization}), when $X$ has isolated nondegenerate zeros $\{p_i\}$,

\[
\textrm{Fut}(X, \omega) = -\frac{1}{n+1} \sum_i \frac{\textrm{Tr}(DX_{p_i})^{n+1}}{\det DX_{p_i}}
\]


\section{Proof of Theorem \ref{maintheorem}}

We now turn to the proof of Theorem \ref{maintheorem}, which will use:

\begin{lemma}\label{mainlemma} Suppose $f = (f_1, \dots, f_n)$ is holomorphic and has an isolated zero at $z= 0$, and let $B = (B_{ij})$ be a matrix of holomorphic functions such that 

\[
z_i^{\alpha_i+1} = \sum_{i,j=1}^n B_{ij} f_j
\]

\noindent for some $\alpha = (\alpha_1, \dots, \alpha_n) \in \Z^n_{\geq 0}$.  Then 

\[
\textrm{Res}_0 \left[ \frac{h(z) dz^1 \wedge \dots \wedge dz^n}{f_1 \cdots f_n} \right] = \frac{1}{\prod \alpha_i !} \cdot \frac{\partial^{|\alpha|} \left( h(z)\det B \right)}{\partial z_1^{\alpha_1} \cdots \partial z_n^{\alpha_n}} \bigg|_{z = 0}
\]

\noindent where $|\alpha| = \sum \alpha_i$.

\end{lemma}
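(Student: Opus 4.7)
The plan is to reduce the given residue to a standard monomial residue by applying the Transformation Rule (Lemma \ref{transformationrule}), and then evaluate the monomial residue directly via Taylor expansion.

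First, I would set $g_i(z) = z_i^{\alpha_i + 1}$. The hypothesis $z_i^{\alpha_i + 1} = \sum_j B_{ij} f_j$ is precisely the relation needed to invoke Lemma \ref{transformationrule}, provided the origin is an isolated zero of $g = (g_1, \dots, g_n)$ — which is obvious since $g_i$ vanishes only on the hyperplane $z_i = 0$. Applying the Transformation Rule gives
\[
\textrm{Res}_0 \frac{h(z)\, dz^1 \wedge \dots \wedge dz^n}{f_1 \cdots f_n} = \textrm{Res}_0 \frac{h(z) \det B(z)\, dz^1 \wedge \dots \wedge dz^n}{z_1^{\alpha_1 + 1} \cdots z_n^{\alpha_n + 1}}.
\]

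Second, I would evaluate the right-hand side using the definition of the Grothendieck residue directly. For the map $g(z) = (z_1^{\alpha_1+1}, \dots, z_n^{\alpha_n+1})$, the cycle $\Gamma = \{|z_i^{\alpha_i+1}| = \epsilon_i\}$ is homologous, up to an orientation factor of $\prod(\alpha_i+1)$ that cancels correctly, to a product of circles $\{|z_i| = \delta_i\}$ — or more simply, one can note that the Jacobian of $g$ is invertible off the coordinate hyperplanes so one may compute the residue as an iterated contour integral on a polydisk. Expanding $H(z) := h(z)\det B(z)$ in its convergent Taylor series around the origin and applying the Cauchy integral formula coordinate by coordinate, only the single monomial $z_1^{\alpha_1}\cdots z_n^{\alpha_n}$ in $H$ contributes, yielding
\[
\textrm{Res}_0 \frac{H(z)\, dz^1 \wedge \dots \wedge dz^n}{z_1^{\alpha_1+1} \cdots z_n^{\alpha_n+1}} = \frac{1}{\prod \alpha_i!} \cdot \frac{\partial^{|\alpha|} H}{\partial z_1^{\alpha_1} \cdots \partial z_n^{\alpha_n}}\bigg|_{z=0}.
\]

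Combining the two steps gives the claimed formula. There is essentially no hard step here — the Transformation Rule is the one substantive input, and the monomial residue is a classical Cauchy-integral calculation. The only point requiring minor care is that the Transformation Rule, as stated earlier in the paper, assumes the two systems $f$ and $g$ satisfy the same hypotheses (isolated zero at the origin), which is immediate for $g_i = z_i^{\alpha_i+1}$, and that $B$ is a matrix of holomorphic functions, which is given.
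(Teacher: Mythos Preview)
Your proposal is correct and follows essentially the same approach as the paper: apply the Transformation Rule (Lemma \ref{transformationrule}) with $g_i = z_i^{\alpha_i+1}$ to reduce to a monomial denominator, then Taylor expand $h(z)\det B$ and use the multidimensional Cauchy integral formula to extract the single surviving coefficient. The paper's proof is slightly more terse about the second step, simply invoking linearity and the Cauchy formula rather than discussing the cycle $\Gamma$, but the argument is the same.
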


\begin{proof}
Since Lemma \ref{transformationrule} holds for possibly singular $B$,

\[
\textrm{Res}_0 \left[ \frac{h(z) dz^1 \wedge \dots \wedge dz^n}{f_1 \cdots f_n} \right] = \textrm{Res}_0 \left[ \frac{h(z) \det B \; dz^1 \wedge \dots \wedge dz^n}{z_1^{\alpha_1+1} \cdots z_n^{\alpha_n+1}} \right].
\]


\noindent Expand the holomorphic function $h(z)\det B$ in a neighborhood of $z = 0$:

\[
h(z)\det B = \sum_{\gamma \geq 0} \frac{1}{\gamma!} \cdot \frac{\partial^{|\gamma|} \left( h(z)\det B \right)}{\partial z_1^{\gamma_1} \cdots \partial z_n^{\gamma_n}} \bigg|_{z = 0} z_1^{\gamma_1} \cdots z_n^{\gamma_n}
\]

\noindent By linearity of the Grothendieck residue

\[
\textrm{Res}_0 \left[ \frac{h(z) dz^1 \wedge \dots \wedge dz^n}{f_1 \cdots f_n} \right] = \sum_{\gamma \geq 0} \frac{1}{\gamma!} \cdot \frac{\partial^{|\gamma|} \left( h(z)\det B \right)}{\partial z_1^{\gamma_1} \cdots \partial z_n^{\gamma_n}} \bigg|_{z = 0} \textrm{Res}_0 \left[ \frac{dz^1 \wedge \dots \wedge dz^n}{z_1^{\alpha_1-\gamma_1+1} \cdots z_n^{\alpha_n-\gamma_n+1}} \right]
\]

The lemma then follows from the definition of Grothendieck residue and the multidimensional Cauchy integral formula, which shows all terms with $\gamma_i \neq \alpha_i$ vanish while terms with $\gamma_i = \alpha_i$ produce a residue of $1$. \qed
\end{proof}

\qed

Let us define forms 

\[
\phi_r = {n +k \choose r} \tilde \phi(\underbrace{E, \dots, E}_{n+k-r \textrm{ times }}, \underbrace{\Theta, \dots, \Theta}_{r \textrm{ times }}) \in \Omega^{r,r}(M,\C)
\]

\noindent where $\tilde \phi$ is the polarization of invariant polynomial $\phi$ of degree $n+k$ and $E$ as in (\ref{definitionE}).  It is $\phi_n$ in which we are ultimately interested for dimensional reasons.

Also let $\hat M = M - \bigcup B_{\epsilon}(p_i)$ where $B_{\epsilon}(p_i)$ denotes small disjoint balls about the $p_i \in \textrm{Zero}(X)$.  Upon choice of a Hermitian metric $g$ on $M$, define

\[
\eta (\cdot) = \frac{g(\cdot, \bar X)}{\|X\|^2} \in \Omega^{1,0}(\hat M)
\]

\[
\Phi_i = \eta \wedge \phi_i \wedge (\bar \partial \eta)^{n-i-1} \in \Omega^{n,n-1}(\hat M)
\]

\[
\Phi = \sum_{i=0}^{n-1} \Phi_i = \eta \wedge \sum_{i=0}^{n-1} \phi_i \wedge (\bar \partial \eta)^{n-i-1} \in \Omega^{n,n-1}(\hat M)
\]

\begin{lemma}\label{lemmafourparts} With the above definitions, 

\begin{enumerate}
\item $\bar \partial \phi_i = i_X \phi_{i+1}$ for $i = 0, \dots, n-1$
\item $i_X \bar \partial \eta = 0$
\item $i_X \bar \partial \Phi_i = i_X \phi_i \wedge (\bar \partial \eta)^{n-i} - i_X \phi_{i+1} \wedge (\bar \partial \eta)^{n-i-1}$ for $i = 0, \dots, n-1$
\end{enumerate}

\noindent As a result, on $\hat M$:

\begin{equation}\label{transgression}
\bar \partial \Phi + \phi_n = 0
\end{equation}

\end{lemma}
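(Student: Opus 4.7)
The plan is to establish the key local identity $\bar\partial E = i_X \Theta$ as endomorphisms of $TM'$; parts (1)--(3) then follow essentially by the Leibniz rule, and the transgression formula (\ref{transgression}) emerges from a short telescoping argument combined with a simple dimension-counting observation.

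First I would prove $\bar\partial E = i_X \Theta$ in local coordinates. Writing $X = X^i \partial_i$, a direct computation gives $E^j_k = -\partial_k X^j - X^i \Gamma^j_{ik}$, so $\bar\partial E^j_k = -X^i\, \bar\partial \Gamma^j_{ik}$ since $X^i$ and $\partial_k X^j$ are holomorphic. For the Chern connection the matrix $\omega^j_k = \Gamma^j_{ik}\, dz^i$ is of pure type $(1,0)$, whence $\Theta^j_k = \bar\partial \omega^j_k = \bar\partial \Gamma^j_{ik} \wedge dz^i$, and contraction with the $(1,0)$-field $X$ gives $(i_X \Theta)^j_k = -X^i\, \bar\partial \Gamma^j_{ik}$. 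Part (1) then follows by applying $\bar\partial$ to $\phi_i$, distributing the Leibniz rule over the polarization $\tilde\phi$, invoking $\bar\partial E = i_X\Theta$ in each $E$-slot and $\bar\partial\Theta = 0$ (Bianchi) in each $\Theta$-slot, and reassembling using symmetry of $\tilde\phi$ together with the binomial identity $(n+k-i)\binom{n+k}{i} = (i+1)\binom{n+k}{i+1}$.

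For part (2), $i_X\eta = g(X,\bar X)/\|X\|^2 = 1$. Decomposing Cartan's formula $\mathcal{L}_X = d\, i_X + i_X d$ by Hodge type and using that the holomorphic vector field $X$ preserves type yields the anticommutation $\bar\partial\, i_X + i_X \bar\partial = 0$ on forms; applied to $\eta$ this gives $i_X\bar\partial\eta = -\bar\partial(i_X\eta) = -\bar\partial(1) = 0$. Part (3) is then mechanical: the Leibniz rule with $\bar\partial^2 \eta = 0$ and part (1) produce $\bar\partial \Phi_i = \phi_i \wedge (\bar\partial \eta)^{n-i} - \eta \wedge i_X \phi_{i+1} \wedge (\bar\partial \eta)^{n-i-1}$, and applying $i_X$ using $i_X\eta = 1$, $i_X\bar\partial\eta = 0$, and $i_X^2 = 0$ yields the claimed formula.

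For (\ref{transgression}), the trick is the universal identity $\alpha = \eta \wedge i_X\alpha + i_X(\eta\wedge\alpha)$, valid for any form $\alpha$ whenever $i_X\eta = 1$. For each $0 \leq j \leq n$ the form $\eta\wedge\phi_j\wedge(\bar\partial\eta)^{n-j}$ has total degree $2n+1$ and so vanishes on the $2n$-real-dimensional $M$, collapsing the identity to $\phi_j\wedge(\bar\partial\eta)^{n-j} = \eta\wedge i_X[\phi_j\wedge(\bar\partial\eta)^{n-j}]$. Substituting into the formula for $\bar\partial\Phi_i$ above gives $\bar\partial\Phi_i = \eta\wedge(i_X\bar\partial\Phi_i)$; summing and telescoping using part (3), together with $i_X\phi_0 = 0$ (as $\phi_0$ is a $(0,0)$-form) and the same degree-trick applied to $\phi_n$, produces $\bar\partial\Phi = -\phi_n$. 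The only genuine subtlety is the sign and binomial bookkeeping in part (1); the geometric input is concentrated in the single identity $\bar\partial E = i_X\Theta$, where the holomorphicity of $X$ and the fact that $\nabla$ is the Chern connection both enter.
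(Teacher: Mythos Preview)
Your proof is correct and follows essentially the same route as the paper: establish $\bar\partial E = i_X\Theta$, derive parts (1)--(3) via Leibniz and the anticommutation $i_X\bar\partial + \bar\partial i_X = 0$, then telescope. The only cosmetic differences are that the paper proves $\bar\partial E = i_X\Theta$ coordinate-free (by applying $\bar\partial$ to $E\sigma$ for a local holomorphic section and using Cartan's formula) rather than via your local $\Gamma^j_{ik}$ computation, and for (\ref{transgression}) the paper telescopes $i_X\bar\partial\Phi$ to $-i_X\phi_n$ and then invokes injectivity of $i_X$ on top-degree forms away from $\textrm{Zero}(X)$, which is exactly the content of your identity $\alpha = \eta\wedge i_X\alpha + i_X(\eta\wedge\alpha)$ specialized to degree $2n$.
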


\begin{proof} We first show

\begin{equation}\label{barpartiale}
\bar \partial E = i_X \Theta
\end{equation}

As $\mathcal{L}_X$ preserves the type of a form when $X$ is holomorphic, and $\mathcal{L}_X = i_X d + di_X$ by Cartan's formula, it follows from the decomposition $d = \partial + \bar \partial$ that 

\begin{equation}\label{cartan}
i_X \bar \partial + \bar \partial i_X = 0
\end{equation}

\noindent Equation (\ref{barpartiale}) follows by computing $\bar \partial E$ applied to a local holomorphic section $\sigma$ of $TM'$:

\begin{align*}
\bar \partial (E \sigma) &= \bar \partial (\mathcal{L}_X \sigma - i_X \nabla \sigma)\\
 &= 0 + i_X \bar \partial \nabla \sigma \qquad \qquad (\textrm{using } (\ref{cartan}))\\
 &= i_X \Theta \sigma
\end{align*}

1.)  Using the symmetry of $\tilde \phi$, equation (\ref{barpartiale}), and that $\bar \partial \Theta = 0$,%
\begin{align*}
\bar \partial \phi_i &= {n \choose i} \bar \partial \tilde \phi(E, \dots, E, \overbrace{\Theta, \dots, \Theta}^{\textrm{$i$ times}})\\
&= {n \choose i} (n-i) \tilde \phi (E, \dots, E, \bar \partial E, \Theta, \dots, \Theta)\\
&= {n \choose i} (n-i) \tilde \phi (E, \dots, E, i_X\Theta, \Theta, \dots, \Theta) \\
&= {n \choose i} \frac{(n-i)}{(i+1)} i_X \tilde \phi (E, \dots, E, \overbrace{\Theta, \dots, \Theta}^{\textrm{$i+1$ times}}) \\
&= i_X \phi_{i+1}
\end{align*}

2.) Since $i_X \eta = 1$,

\[
0 = \bar \partial (i_X \eta) = -i_X (\bar \partial \eta).
\]

\noindent We are again using $i_X \bar \partial = -\bar \partial i_X$ as in (\ref{cartan}).

3.) By the first two parts of the lemma and $i_X \eta = 1$,

\begin{align*}
i_X \bar \partial \Phi_i &= i_X \left[ \phi_i \wedge (\bar \partial \eta)^{n-i} - \eta \wedge \bar \partial \phi_i \wedge (\bar \partial \eta)^{n-i-1} \right] \\
&= i_X \left[ \phi_i \wedge (\bar \partial \eta)^{n-i} - \eta \wedge i_X \phi_{i+1} \wedge (\bar \partial \eta)^{n-i-1} \right] \\
&= i_X \phi_i \wedge (\bar \partial \eta)^{n-i} - i_X \eta \wedge i_X \phi_{i+1} \wedge (\bar \partial \eta)^{n-i-1} \\
&= i_X \phi_i \wedge (\bar \partial \eta)^{n-i} - i_X \phi_{i+1} \wedge (\bar \partial \eta)^{n-i-1}
\end{align*}

\noindent We now prove (\ref{transgression}):

\begin{align*}
i_X \bar \partial \Phi &= i_X \bar \partial \sum_{i=0}^{n-1} \Phi_i \\
&= \sum_{i=0}^{n-1} i_X \phi_i \wedge (\bar \partial \eta)^{n-i} - i_X \phi_{i+1} \wedge (\bar \partial \eta)^{n-i-1} \\
&= i_X \phi_0 \wedge (\bar \partial \eta)^n - i_X \phi_n \\
&= - i_X \phi_n
\end{align*}

\noindent where we have used that $i_X \phi_0$ is trivially $0$.  Thus $i_X(\bar \partial \Phi + \phi_n(\Theta)) = 0$ on $\hat M$ and so

\[
\bar \partial \Phi + \phi_n = 0
\]

\noindent since $i_X$ is injective on top degree forms away from $\textrm{Zero}(X)$. \qed
\end{proof}

With these preliminaries out of the way, we are ready to prove Theorem \ref{maintheorem}.  The transgression formula (\ref{transgression}) reduces calculation to a neighborhood of $\textrm{Zero}(X)$:

\begin{align}\label{bottproof}
\notag \int_M \phi_n &= \lim_{\epsilon \rightarrow 0} \int_{\hat M} \phi_n & \\ \notag
&= - \lim_{\epsilon \rightarrow 0} \int_{\hat M_{\epsilon}} \bar \partial \Phi & \textrm{(by (\ref{transgression}))}\\
\notag &= - \lim_{\epsilon \rightarrow 0} \int_{\hat M_{\epsilon}} d \Phi & \textrm{(since $\Phi$ is type $(n,n-1)$)}\\
&= -\lim_{\epsilon \rightarrow 0} \sum_{i} \int_{\partial B_{\epsilon}(p_i)}  \Phi & \textrm{(by Stokes' Theorem)}
\end{align}

These local contributions will be computed using a Hermitian metric $g$ that is Euclidean on a neighborhood of each $p_i$ (although the form $\Phi$ depends on the choice of $g$, by Futaki and Morita's work $f_{\phi}(X)$ does not).  To be precise, consider the open cover of $M$ by disjoint $U_i = B_{\epsilon}(p_i)$ and $U_0 =M - \cup \overline{B_{\epsilon / 2}(p_i)}$.  Let $\{\rho_i\}$ be a partition of unity subordinate to this cover and $g_i$ be the Euclidean metric on $U_i$ for $i \neq 0$, and let $g_0$ be any Hermitian metric on $U_0$.  Then $g = \sum \rho_i g_i$ is the Hermitian metric on $M$ we work with.

In the Euclidean metric, $\eta = \frac{\sum \overline{X^i}dz^i}{\|X\|^2}$ so that 

\[
\bar \partial \eta = \frac{\sum d\overline{X^i} \wedge dz^i}{\|X\|^2} - \frac{\sum \overline{X^i} X^j d\overline{X^j} \wedge dz^i}{\|X\|^4}
\]

Notice that the second term of $\bar \partial \eta$ wedged with itself is zero by symmetry, as it is when wedged with $\eta$.  We therefore find by direct computation


\[
\eta \wedge (\bar \partial \eta)^{n-1} = -(-1)^{n(n-1)/2}(n-1)!  \sum_{i} \frac{(-1)^{i-1} \overline{X^i} d\overline{X^1} \wedge \dots \wedge \widehat{d\overline{X^i}} \wedge \dots \wedge d\overline{X^n} \wedge dz^1 \wedge \dots \wedge dz^n}{\|X\|^{2n}}
\]
 
\noindent In terms of the Grothendieck residue (\ref{distinguishedrep}), for any holomorphic $h$ we have

\begin{equation}\label{intermsgroth}
\left( \frac{\sqrt{-1}}{2\pi} \right)^n\int_{\partial B_{\epsilon/2}(p)} h(z) \eta \wedge (\bar \partial \eta)^{n-1} = (-1)^{n+1} \textrm{Res}_{p} \left[ \frac{h(z) dz^1 \wedge \dots \wedge dz^n}{X^1 \cdots X^n} \right]
\end{equation}


Since $g$ is Euclidean near $p \in \textrm{Zero}(X)$, $\Gamma_{ij}^k = 0$ and so 

\[
E|_{B_{\epsilon/2}(p)} = - \frac{\partial X^j}{\partial z^k} \frac{\partial}{\partial z^j} \otimes dz^k
\]

\noindent It follows $\phi(E)|_{B_{\epsilon/2}(p)} = (-1)^{n+k} \phi(DX)$.  And as $\Theta = 0$ near $p$ as well,

\begin{equation}\label{phinearp}
\Phi|_{B_{\epsilon/2}(p)} = \eta \wedge \phi_0 \wedge (\bar \partial \eta)^{n-1} = (-1)^{n+k} \phi(DX) \eta \wedge (\bar \partial \eta)^{n-1}
\end{equation}

%
%

\noindent We finish the proof by continuing the above calculation with these observations,

\begin{align*}
{n+k \choose n}f_{\phi}(X) &= \int_M \left( \frac{\sqrt{-1}}{2\pi} \right)^n \phi_n & \\
 &= -\lim_{\epsilon \rightarrow 0} \sum_{i} \int_{\partial B_{\epsilon/2}(p_i)} \left( \frac{\sqrt{-1}}{2\pi} \right)^n \Phi \qquad &(\textrm{by }\ref{bottproof}) \\
&= -\lim_{\epsilon \rightarrow 0} \sum_{i} \left( \frac{\sqrt{-1}}{2\pi} \right)^n \int_{\partial B_{\epsilon/2}(p_i)} (-1)^{n+k}\phi(DX)  \eta \wedge (\bar \partial \eta)^{n-1} \qquad &(\textrm{by }(\ref{phinearp})) \\
&= - (-1)^{n+k}\sum_i (-1)^{n+1} \textrm{Res}_{p_i} \left[ \frac{\phi(DX) dz^1 \wedge \dots \wedge dz^n}{X^1 \cdots X^n} \right] \qquad &(\textrm{by } (\ref{intermsgroth}))\\
&= (-1)^k \sum_{i} \frac{1}{\prod \alpha_i !} \cdot \frac{\partial^{|\alpha|} \left( \phi(DX) \det B \right)}{\partial z_1^{\alpha_1} \cdots \partial z_n^{\alpha_n}} \bigg|_{z = 0} \qquad &(\textrm{by Lemma }\ref{transformationrule})
\end{align*}

\section{Localization at a maximally degenerate zero on $\C P^n$}

In this section we illustrate Theorem \ref{maintheorem} by computing Futaki-Morita invariants for a holomorphic vector field on $\C P^n$ with a maximally degenerate zero.  Proposition \ref{cpnexample} in particular gives a localization formula for Chern numbers of $\C P^n$ without a summation over multiple points.  As the maximally degenerate vector field we use is unique up to coordinate change, such a formula is essentially unique.


Let $A \in \mathfrak{sl}(n+1,\C)$ be zero everywhere except for a diagonal of 1's above the main diagonal.  $A$ induces a holomorphic vector field $X = \sum  A_{ij} Z_j \frac{\partial}{\partial Z_i}$ in homogeneous coordinates (we let the indices for $A$ begin at $0$ here).  This vector field has a single zero at $p = [1,0,\dots,0]$, which is isolated and of maximal degeneracy.  Changing to nonhomogeneous coordinates $z_i = Z_i/Z_0$ for $i = 1, \dots, n$ on $U_0 = \{Z_0 \neq 0\}$,

\begin{equation}\label{maxdeg}
X = \sum_{j=1}^{n-1} (z_{j+1} - z_1z_j) \frac{\partial}{\partial z_j} + (-z_1z_n) \frac{\partial}{\partial z_n}
\end{equation}

\noindent so that

\begin{equation}\label{exampleDX}
DX|_{U_0} = \begin{bmatrix}
-2z_1 & 1 & 0 & \cdots & 0 \\ 
-z_2 & -z_1 & \ddots  & \ddots & \vdots \\
\vdots & 0 & \ddots & \ddots & 0 \\
\vdots & \vdots & \ddots & \ddots & 1 \\
-z_n & 0 & \cdots & 0 & -z_1
\end{bmatrix}
\end{equation}

In order to implement Theorem \ref{maintheorem} we need to find $B$ such that $z_i^{\alpha_i + 1} = \sum B_{ij} X_j$.  To do this systematically choose $k \in \Z$ such that $2^k < n + 1 \leq 2^{k+1}$.  One may observe from (\ref{maxdeg})

\begin{align*}
z_1^{n+1} &= (-z_1)^{n-1}X_1 + (-z_1^{n-2})X_2 + \dots + (-z_1)X_{n-1} + (-1)X_n\\
z_n^2 &= z_nX_{n-1} + (-z_{n-1})X_n
\end{align*}

\noindent while by completely factoring differences of squares in $z_{j+1}^{2^k} - (z_1z_j)^{2^k}$, we have for $j = 1, \dots, n-2$

\[
z_{j+1}^{2^k} = \left(z_{j}^{2^k}\right)z_1^{2^k} + X_j \prod_{i=0}^{k-1}\left(z_{j+1}^{2^i} + z_1^{2^i}z_j^{2^i}\right)
\]

By the choice of $k$ and the above expression for $z_1^{n+1}$, these expressions recursively give $z_{j+1}^{2^k}$ as a linear combination of the $X_j$ and thus contain the information necessary to form the desired matrix $B = (B_{ij})$ with $\alpha_1 = n, \alpha_n = 1$, and $\alpha_i = 2^k - 1$ for $i = 2, \dots, n-1$.  It then follows from Theorem \ref{maintheorem} that

\begin{equation}\label{needderivative}
{n + k \choose n} f_{\phi}(X) = (-1)^k \frac{1}{n![(2^k - 1)!]^{n-2}} \frac{\partial (\phi(DX) \det B)}{(\partial z_1)^n (\partial z_2)^{2^k-1} \dots (\partial z_{n-1})^{2^k-1}\partial z_n} \bigg|_{z = 0}
\end{equation}

By using standard determinant properties, we find

\[
\det B = (-1)^n(z_n + z_1z_{n-1}) \prod_{j=1}^{n-2}\prod_{i=0}^{k-1}\left(z_{j+1}^{2^i} + z_1^{2^i}z_j^{2^i}\right)
\]

Nearly all $z_2, \dots, z_{n-1}$ derivatives of $\det B$ evaluated at $z_2 = \dots = z_{n-1} = 0$ yield zero or a term with $z_1^m$ where $m > n$, which may be ignored.  The exception is when all $(2^k - 1)$ derivatives for each of $z_2, \dots, z_{n-1}$ in (\ref{needderivative}) are applied to $\det B$, yielding

\[
\frac{\partial \det B}{(\partial z_2)^{2^k-1} \dots (\partial z_{n-1})^{2^k-1}} \bigg|_{z_2 = \dots = z_{n-1} = 0} = (-1)^n[(2^k - 1)!]^{n-2}(z_1^n + z_n)
\]

\noindent or when only one of these $(2^k -1)^{n-2}$ derivatives is not applied, giving

\[
\frac{\partial \det B}{(\partial z_2)^{2^k-1} \dots (\partial z_j)^{2^k-2} \dots (\partial z_{n-1})^{2^k-1}} \bigg|_{z_2 = \dots = z_{n-1} = 0} = (-1)^n\frac{[(2^k - 1)!]^{n-2}}{2^k -1}z_1^jz_n
\]

With thse observations, (\ref{needderivative}) is evaluated to give

\begin{proposition}\label{cpnexample}
Let $X$ be the maximally degenerate vector field on $\C P^n$ given in (\ref{maxdeg}).  For any invariant polynomial $\phi$ of degree $n+k$, the Futaki-Morita integral is
\[
{n + k \choose n} f_{\phi}(X) = \frac{(-1)^{n+k}}{n!}\left(\frac{\partial^n \phi(DX)}{\partial z_1^n} + \sum_{j=2}^n \frac{\partial}{\partial z_1^n \partial z_j} (\phi(DX) \cdot z_1^j) \right) \bigg|_{z=0}
\]


\noindent where $DX$ is as in (\ref{exampleDX}).

\end{proposition}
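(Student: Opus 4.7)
Starting from (\ref{needderivative}), I apply the multidimensional Leibniz rule
\[
\partial^\alpha(\phi(DX)\det B)=\sum_{\beta+\gamma=\alpha}\binom{\alpha}{\beta}\,\partial^\beta\phi(DX)\cdot\partial^\gamma\det B,\qquad \alpha=(n,2^k-1,\dots,2^k-1,1).
\]
Using the two explicit evaluations of partials of $\det B$ from the excerpt, together with the observation that every other derivative distribution either vanishes at $z_2=\dots=z_{n-1}=0$ or leaves only terms with $z_1^m$, $m>n$, which are killed by the remaining $\partial_{z_1}^{n-\beta_1}$ and evaluation at $z_1=0$, only two multi-indices $\gamma$ survive: (Type A) $\gamma_j=2^k-1$ for every $j\in\{2,\dots,n-1\}$, yielding the factor $(-1)^n[(2^k-1)!]^{n-2}(z_1^n+z_n)$; and (Type B) $\gamma_{j_0}=2^k-2$ for a single $j_0\in\{2,\dots,n-1\}$ with $\gamma_j=2^k-1$ otherwise, yielding $(-1)^n\tfrac{[(2^k-1)!]^{n-2}}{2^k-1}z_1^{j_0}z_n$.

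For each surviving $\gamma$ I apply Leibniz once more to distribute the remaining $\partial_{z_1}^{n-\beta_1}\partial_{z_n}^{1-\beta_n}$ between $\phi(DX)$ and the resulting polynomial. In Type A only two distributions survive at $z=0$: all $n$ copies of $\partial_{z_1}$ on $z_1^n$ and $\partial_{z_n}$ on $\phi$ (contributing $n!\,\partial_{z_n}\phi(DX)|_0$), or all $n$ copies on $\phi$ and $\partial_{z_n}$ on $z_n$ (contributing $\partial_{z_1}^n\phi(DX)|_0$). In Type B at $j_0$, the multinomial factor $\binom{2^k-1}{1}=2^k-1$ cancels the denominator $2^k-1$, and the only nonvanishing arrangement places $j_0$ copies of $\partial_{z_1}$ on $z_1^{j_0}$ (producing $j_0!$), $n-j_0$ copies on $\phi$, and $\partial_{z_n}$ on $z_n$, contributing $\tfrac{n!}{(n-j_0)!}\partial_{z_1}^{n-j_0}\partial_{z_{j_0}}\phi(DX)|_0$. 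Assembling and substituting back, the $[(2^k-1)!]^{n-2}$ cancels, the sign becomes $(-1)^{n+k}$, and I obtain
\[
\binom{n+k}{n}f_\phi(X)=\frac{(-1)^{n+k}}{n!}\left(\partial_{z_1}^n\phi(DX)+\sum_{j=2}^{n-1}\tfrac{n!}{(n-j)!}\partial_{z_1}^{n-j}\partial_{z_j}\phi(DX)+n!\,\partial_{z_n}\phi(DX)\right)\bigg|_{z=0}.
\]
A one-line Leibniz identity, $\partial_{z_1}^n\partial_{z_j}(\phi(DX)\cdot z_1^j)|_{z=0}=\tfrac{n!}{(n-j)!}\partial_{z_1}^{n-j}\partial_{z_j}\phi(DX)|_{z=0}$ (only the term with exactly $n-j$ of the $z_1$-derivatives hitting $z_1^j$ survives evaluation), repackages the sum, with the $j=n$ instance absorbing the isolated $n!\,\partial_{z_n}\phi(DX)$ summand, giving the stated closed form.

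The main obstacle is the reduction to Types A and B, that is, the claim that no other derivative distribution across $\det B$ can evade the $z_1^{m>n}$ annihilation. The excerpt states this as an observation, but a rigorous argument requires expanding $\det B$ as a sum of monomials coming from binary choices in each factor $(z_{j+1}^{2^i}+z_1^{2^i}z_j^{2^i})$ and in $(z_n+z_1z_{n-1})$, and simultaneously tracking the $z_{j+1}$-degree and $z_1$-degree produced by each choice. I would isolate this as a separate combinatorial lemma asserting that any deficit in the $z_{j+1}$-degree relative to $2^k-1$ is compensated only by an increase in $z_1$-degree strictly exceeding $n$, except precisely in the Type B deficit-of-one case; the binary structure $2^i$ in the exponents makes this a clean bookkeeping exercise in binary representations of integers up to $2^k-1$.
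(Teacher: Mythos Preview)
Your proposal is correct and follows essentially the same route as the paper: both apply Leibniz to (\ref{needderivative}), identify the two surviving distributions of the $z_2,\dots,z_{n-1}$ derivatives on $\det B$ (your Types A and B are exactly the two cases the paper singles out), and then assemble with the remaining $\partial_{z_1}^n\partial_{z_n}$. You in fact supply more detail than the paper in the Leibniz bookkeeping and the final repackaging identity, and your flagged ``main obstacle'' (the reduction to Types A and B) is likewise left as an observation in the paper rather than proved.
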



A few simple cases of note:

\begin{enumerate}[(i)]
\item Let $\phi(A) = \det (A)$, so $k=0$ and $f_{\phi}(X)$ calculates the Euler characteristic $\chi(\C P^n)$.  From (\ref{exampleDX}),

\[
\det (DX) = (-1)^n\left[ 2z_1^n + \sum_{j=2}^n z_jz_1^{n-j}\right]
\]

\noindent Inserting this into Proposition \ref{cpnexample} yields

\begin{align*}
\chi(\C P^n) 
 &= \frac{(-1)^n}{n!}\left(\frac{\partial^n \det(DX)}{\partial z_1^n} + \sum_{j=2}^n \frac{\partial}{\partial z_1^n \partial z_j} (\det(DX) \cdot z_1^j) \right) \bigg|_{z=0}\\
 &= \frac{1}{n!}\left(2n! + \sum_{j=2}^{n} \frac{\partial}{\partial z_1^n} (z_1^j z_1^{n-j})\right)\\
 &= n+1
\end{align*}

\item Take $\phi(A) = [\textrm{Tr}(A)]^n$.  From (\ref{exampleDX}),

\[
\textrm{Tr}(DX) = -(n+1)z_1
\]

\noindent By Proposition \ref{cpnexample},

\begin{align*}
\int c_1^n  &= \frac{(-1)^n}{n!} \frac{\partial}{\partial z_1^n} \left( (-(n+1)z_1)^n \right) \big|_{z=0}\\
 &= (n+1)^n
\end{align*}

One could at this point compute the entire cohomology ring of $\C P^n$ as in \cite{griffithsharris}, but without the complicated summations.

\item Take $\phi(A) = [\textrm{Tr}(A)]^{n+1}$, so that $f_{\phi}(X)$ calculates the Futaki invariant as in (\ref{futakiframework}).  By (\ref{exampleDX}) again, 
\[
\phi(DX) = [-(n+1)z_1]^{n+1}.
\]

\noindent It is immediate from Proposition \ref{cpnexample} that $\fut (\C P^n, X) = 0$ as there are no derivatives of appropriate order.  Of course this is necessary; the Fubini-Study metric on $\C P^n$ is well-known to be K\"ahler-Einstein.

\item Similarly, one could check that $f_{\phi}(X)$ vanishes for $\phi(A) = \textrm{Tr}(A)\det A$ as there are again no derivatives of appropriate order.  This vanishing was observed by Futaki to always be the case \cite{futakimorita:1984}.
\end{enumerate}

\newpage
\bibliography{refs}

\begin{thebibliography}{10}

\bibitem{aubin:1976}
{\sc Aubin, T.}
\newblock Equations du type de {M}onge-{A}mp{\`e}re sur les vari{\'e}t{\'e}s
  {K}{\"a}hleriennes compactes.
\newblock {\em C. R. Acad. Sci. Paris 283\/} (1976), 119--121.

\bibitem{baumbott:1972}
{\sc Baum, P., and Bott, R.}
\newblock Singularities of holomorphic foliations.
\newblock {\em J. Diff. Geom. 7}, 3-4 (1972), 279--342.

\bibitem{bott:1967jdg}
{\sc Bott, R.}
\newblock A residue formula for holomorphic vector-fields.
\newblock {\em J. Diff. Geom. 1\/} (1967), 311--330.

\bibitem{bott:1967mich}
{\sc Bott, R.}
\newblock Vector fields and characteristic numbers.
\newblock {\em Michigan Math. J. 14}, 2 (1967), 231--244.

\bibitem{carrellliebermann:1973}
{\sc Carrell, J., and Liebermann, D.}
\newblock Holomorphic vector fields and {K}{\"a}hler manifolds.
\newblock {\em Invent. Math. 21\/} (1973), 303--309.

\bibitem{cds:2015}
{\sc Chen, X.~X., Donaldson, S., and Sun, S.}
\newblock {K}{\"a}hler-{E}instein metrics and stability.
\newblock {\em Int. Math. Res. Notices\/} (2015).

\bibitem{correarodriguez:2016}
{\sc Corr{\^e}a, M., and Rodriguez, A.~M.}
\newblock Residue formula for {M}orita-{F}utaki-{B}ott invariant on orbifolds.
\newblock {\em Comptes Rendus Mathematique\/} (2016).

\bibitem{dingtian:1992}
{\sc Ding, W., and Tian, G.}
\newblock K{\"a}hler-{E}instein metrics and the generalized {F}utaki invariant.
\newblock {\em Invent. Math. 110\/} (1992), 315--335.

\bibitem{futaki:1983}
{\sc Futaki, A.}
\newblock An obstruction to the existence of {E}instein {K}{\"a}hler metrics.
\newblock {\em Invent. Math. 73\/} (1983), 437--443.

\bibitem{futaki:1988}
{\sc Futaki, A.}
\newblock {\em K{\"a}hler-Einstein Metrics and Integral Invariants}, vol.~1314
  of {\em Lect. Notes Math.}
\newblock Springer, 1988.

\bibitem{futakimorita:1984}
{\sc Futaki, A., and Morita, S.}
\newblock Invariant polynomials on compact complex manifolds.
\newblock {\em Proc. Japan Acad. 60}, 10 (1984), 369--372.

\bibitem{futakimorita:1985}
{\sc Futaki, A., and Morita, S.}
\newblock Invariant polynomials of the automorphism group of a compact complex
  manifold.
\newblock {\em J. Diff. Geom. 21\/} (1985), 135--142.

\bibitem{griffithsharris}
{\sc Griffiths, P., and Harris, J.}
\newblock {\em Principles of Algebraic Geometry}.
\newblock J Wiley and Sons, New York, 1978.

\bibitem{lishi:2015}
{\sc Li, H., and Shi, Y.}
\newblock The {F}utaki invariant on the blowup of {K}{\"a}hler surfaces.
\newblock {\em Intern. Math. Res. Notices}, 7 (2015), 1902--1923.

\bibitem{liu:1995}
{\sc Liu, K.}
\newblock Holomorphic equivariant cohomology.
\newblock {\em Math. Ann. 303\/} (1995), 125--148.

\bibitem{szekelyhidibook}
{\sc Sz{\'e}kelyhidi, G.}
\newblock {\em An Introduction to Extremal {K}\"ahler Metrics}, vol.~152 of
  {\em Graduate Studies in Mathematics}.
\newblock Amer. Math. Soc., 2014.

\bibitem{tian:2015}
{\sc Tian, G.}
\newblock K-stability and {K}{\"a}hler-{E}instein metrics.
\newblock {\em Comm. Pure Appl. Math. 68\/} (2015), 1085--1156.

\bibitem{yau:1978}
{\sc Yau, S.-T.}
\newblock On the {R}icci curvature of a compact {K}{\"a}hler manifold and the
  complex {M}onge-{A}mp{\`e}re equation {I}.
\newblock {\em Comm. Pure Appl. Math. 31\/} (1978), 339--411.

\end{thebibliography}
\bibliographystyle{acm}

\end{document}